\documentclass{article}
\usepackage[utf8]{inputenc}
\usepackage{amsmath}
\usepackage{amssymb}
\usepackage{amsthm}
\usepackage{tikz-cd}
\usepackage{graphicx}
\setlength{\parindent}{0pt}
\usepackage{mathtools}
\usepackage{extarrows}
\usepackage{booktabs}
\usepackage{pythonhighlight}
\usepackage{blkarray}
\usepackage{url}

\usepackage{biblatex}
\addbibresource{mybib.bib}

\newtheorem{lemma}{Lemma}
\newtheorem{theorem}{Theorem}

\usepackage{geometry}
 \geometry{
 a4paper,
 total={170mm,257mm},
 left=20mm,
right=20mm,
 }

\newcommand{\stupidsum}{\sum_{\substack{i^\text{th},j^\text{th}\text{ vertex}\\\text{adjacent, } i<j}}}

\title{Realizing abstract simplicial complexes with specified edge lengths}
\author{Matthew Ellison\\matthew.ellison.gr@dartmouth.edu\\
Dartmouth College}
\date{}
\begin{document}

\maketitle

\begin{abstract}
    For finite abstract simplicial complex $\Sigma$, initial realization $\alpha$ in $\mathbb{E}^d$, and desired edge lengths $L$, we give practical sufficient conditions for the existence of a non-self-intersecting perturbation of $\alpha$ realizing the lengths $L$. We provide software to verify these conditions by computer and optionally assist in the creation of an initial realization from abstract simplicial data. Applications include proving the existence of a planar embedding of a graph with specified edge lengths or proving the existence of polyhedra (or higher-dimensional polytopes) with specified edge lengths. 

\end{abstract}

\section{Introduction}
Consider the problem of whether a finite abstract simplicial complex\footnote{Recall an \textit{abstract simplicial complex} is a set $S$ of vertices together with a set of subsets $\Delta$ corresponding to edges, triangular faces, tetrahedral volumes, etc. A simplex must contain all its faces, so $\Delta$ must be closed under subsets.} may be realized in $\mathbb{E}^d$ with flat faces, no self-intersection, and prescribed edge lengths. 

One resolution is to assign coordinates to the vertices so the edge lengths are exactly as prescribed. All faces are then fixed, and one may verify the realization is non-self-intersecting. More generally, any rigorous exact geometric construction resolves the problem.\\

We pursue a different approach. Suppose we have an assignment of coordinates to the polytope vertices so the edge lengths are approximately correct. Such an assignment might be produced by computer simulation, or, perhaps, approximate physical construction and measurement. If one proved the existence of a perturbed assignment where (i) the edge lengths are exactly correct and (ii) the perturbated realization is non-self-intersecting, then the problem is also resolved.\\ 

In this paper, we prove practical sufficient conditions for such a perturbation to exist. We also provide code, in the form of a Python3 package, which may be used to prove existence from abstract simplicial data, desired edge lenghts, and an approximate realization. The code can optionally assist in creating the approximate realization. See Section \ref{examples} for example applications.\\ 

Related results include Steinitz's Theorem (\cite{steinitz_orig}\cite{steinitz_writeup}\cite{steinitz_writeup_2}), which gives necessary and sufficient conditions for a graph to be the net of a three-dimensional convex polyhedron; recent work by Abrahamsen at al (\cite{abrahamsen2021geometric}) which establishes the NP-hardness of deciding whether a $k$-dimensional abstract simplicial complex admits a geometric embedding in $\mathbb{R}^d$ for $d\geq 3$ and $k=d-1, d$; and work by Cabello, Demaine, and Rote (\cite{demaine}) on the planar embedding of graphs with specified edge lengths.\\
\section{Perturbing to obtain edge lengths}
In this section we provide sufficient conditions for the existence of a perturbed realization --- possibly self-intersecting --- with edge lengths exactly correct. We will address self-intersection in the following section.\\

Let $\Sigma$ be an finite abstract simplicial complex with vertex set $V$ and edge set $E$. Higher dimensional structure is not relevant in this section. Consider the map $l^2: \mathbb{R}^{d|V|}\rightarrow \mathbb{R}^{|E|}$, which takes coordinates of vertices to square lengths of edges.\footnote{Fixing, once and for all, an ordering of the vertex components and edges.} We choose square lengths of edges because we will be taking derivatives shortly. 

Let $\alpha\in\mathbb{R}^{d|V|}$ denote our initial approximate realization, and let $l^2_* \in \mathbb{R}^{|E|}$ denote our desired square edge lengths for $\Sigma$.\\

In this language, we want to find a point $x$ near $\alpha$ such that $l^2(x) = l^2_*$. Our approach will be to find sufficient conditions for a ball around $\alpha$ to surject, under $l^2$, onto a neighborhood of $l^2(\alpha)$ containing $l^2_*$.\\

We begin by analyzing the derivatives of $l^2$.
\begin{lemma}\label{l2_jac_lemma}Let $x,y$ be two points in $\mathbb{R}^d$. Then
$$\frac{\partial}{\partial x} ||x-y||^2 = 2(x-y).$$
\end{lemma}
\begin{proof}
\begin{align*}
    \frac{\partial}{\partial x} ||x-y||^2= \frac{\partial}{\partial x}[(x-y)^T(x-y)]=\frac{\partial}{\partial x}x^Tx - \frac{\partial}{\partial x}x^Ty - \frac{\partial}{\partial x} y^T x=2x - y - y.
\end{align*}
\end{proof}
To unpack this compact notation, let $x_i$ and $y_i$ denote the $i$th components of $x$ and $y$. We then have $\frac{\partial}{\partial x_i} ||x-y||^2 = 2(x_i-y_i)$.\\

Lemma \ref{l2_jac_lemma} makes it straightforward to compute the Jacobian of $l^2$. For the second derivatives, we may apply the following result:
\begin{lemma}\label{l2_hess_lemma}
 Let $x,y$ be two points in $\mathbb{R}^d$, and let $I$ be the $d\times d$ identity matrix. Then
    $$\frac{\partial^2}{\partial x\partial y}||x-y||^2 = \frac{\partial^2}{\partial y\partial x}||x-y||^2 =-2I,\ \frac{\partial^2}{\partial x^2}||x-y||^2=\frac{\partial^2}{\partial y^2}||x-y||^2= 2I.$$
\end{lemma}
\begin{proof}[]
\end{proof}
An example unpacking is $\frac{\partial^2}{\partial x_i\partial y_j}||x-y||^2 =(-2I)_{ij}=-2\delta_{ij}$.\\

The higher derivatives of $l^2$ vanish. Consider the Taylor expansion of $l^2$ about $\alpha$. For a perturbation $\epsilon$, which perturbs the $i$th vertex by $\epsilon_i$, we have\\
\begin{equation}\label{taylor_expansion}
l^2(\alpha + \epsilon) - l^2(\alpha) = Dl^2(\alpha)\epsilon + \frac{1}{2} \stupidsum \left[ 2\epsilon_i^T \epsilon_i-4\epsilon_i^T \epsilon_j+2\epsilon_j^T \epsilon_j\right]e_{ij}.
\end{equation}
Here $e_{ij}$ denotes the unit vector corresponding to the respective edge.\\

In Equation \ref{taylor_expansion}, we are interested in lower-bounding the left hand side's magnitude. Preparing to apply the reverse triangle inequality, we note the following.
\begin{align*}
\intertext{Let $\sigma_{\min}$ denote the smallest singular value of $Dl^2(\alpha)$.}
||Dl^2(\alpha)\epsilon||&\geq \sigma_{\min} ||\epsilon||\\
\left|\left|\frac{1}{2}\sum_{(i,j)\in \text{Edge}(T)} \left[ 2\epsilon_i^T \epsilon_i-4\epsilon_i^T \epsilon_j+2\epsilon_j^T \epsilon_j\right]e_{(i,j)}\right|\right|&\leq \frac{1}{2}\sqrt{|E|}\left(8||\epsilon||^2\right).
\end{align*}
Thus, for $\epsilon$ sufficiently small that $\sigma_{\min}\geq 4\sqrt{|E|}||\epsilon||$, we have 
$$||f(\alpha+\epsilon)-f(\alpha)||\geq \sigma_{\min}||\epsilon||-4\sqrt{|E|}||\epsilon||^2.$$
With a little topological work, we can now push this toward the kind of result we want.
\begin{lemma}
    Suppose $d|V|\geq |E|$, $\sigma_{\min} >0$ (so we are locally surjective), and fix $\delta$ sufficiently small that $\sigma_{\min}> 4\sqrt{|E|}\ \delta$. Then the image under $l^2$ of the closed ball of radius $\frac{\sigma_{\min}}{4\sqrt{|E|}}$ around $\alpha$ takes every value in the ball of radius $\sigma_{\min}\delta-4\sqrt{|E|}\delta^2$ centered at $l^2(\alpha)$.
\end{lemma}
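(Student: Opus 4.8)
The plan is to read this statement as a quantitative inverse function theorem and to prove it by a Picard--Newton iteration packaged as a Brouwer fixed-point argument. Write $A := Dl^2(\alpha)$, an $|E|\times d|V|$ matrix. Because the higher derivatives of $l^2$ vanish, the Taylor expansion in Equation \ref{taylor_expansion} is \emph{exact}: $l^2(\alpha+\epsilon)-l^2(\alpha) = A\epsilon + Q(\epsilon)$, where $Q(\epsilon)$ is the displayed quadratic term (its $(i,j)$-component is $\|\epsilon_i-\epsilon_j\|^2$), and $\|Q(\epsilon)\|\le 4\sqrt{|E|}\,\|\epsilon\|^2$ is exactly the bound already recorded above. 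The hypotheses $d|V|\ge |E|$ and $\sigma_{\min}>0$ together say precisely that $A$ has full row rank $|E|$; let $R\subseteq\mathbb{R}^{d|V|}$ be its row space (the orthogonal complement of $\ker A$), an $|E|$-dimensional subspace, so that $A|_R\colon R\to\mathbb{R}^{|E|}$ is a linear isomorphism whose inverse $B := (A|_R)^{-1}$ has operator norm $1/\sigma_{\min}$ and satisfies $AB=\mathrm{id}_{\mathbb{R}^{|E|}}$. Restricting all perturbations to $R$ is what turns the overdetermined-looking problem into a square one.

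Now fix a target value $p$ with $\|p\|\le r := \sigma_{\min}\delta-4\sqrt{|E|}\,\delta^2$; the goal is an $\epsilon^\ast\in R$ with $\|\epsilon^\ast\|\le\delta$ and $A\epsilon^\ast+Q(\epsilon^\ast)=p$, since then $l^2(\alpha+\epsilon^\ast)=l^2(\alpha)+p$. Define $G\colon \overline{B}_R(0,\delta)\to R$ by $G(\epsilon) = B\bigl(p - Q(\epsilon)\bigr)$; a fixed point of $G$ is exactly such an $\epsilon^\ast$, because applying $A$ to $\epsilon=B(p-Q(\epsilon))$ and using $AB=\mathrm{id}$ gives $A\epsilon+Q(\epsilon)=p$. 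The map $G$ is continuous ($Q$ is a quadratic polynomial and $B$ is linear), so the only thing to verify is that $G$ carries the closed $\delta$-ball of $R$ into itself — and this closes precisely because of how $r$ is defined: for $\|\epsilon\|\le\delta$,
\[
\|G(\epsilon)\| \le \tfrac{1}{\sigma_{\min}}\bigl(\|p\| + \|Q(\epsilon)\|\bigr) \le \tfrac{1}{\sigma_{\min}}\bigl(r + 4\sqrt{|E|}\,\delta^2\bigr) = \tfrac{1}{\sigma_{\min}}\bigl(\sigma_{\min}\delta\bigr) = \delta,
\]
where the assumption $\sigma_{\min}>4\sqrt{|E|}\,\delta$ is exactly what makes $r>0$ so the target ball is nondegenerate. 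Brouwer's fixed-point theorem now gives $\epsilon^\ast\in\overline{B}_R(0,\delta)$ with $G(\epsilon^\ast)=\epsilon^\ast$, hence $l^2(\alpha+\epsilon^\ast)=l^2(\alpha)+p$; and $\|\epsilon^\ast\|\le\delta<\tfrac{\sigma_{\min}}{4\sqrt{|E|}}$, so $\alpha+\epsilon^\ast$ lies in the closed ball of radius $\tfrac{\sigma_{\min}}{4\sqrt{|E|}}$ about $\alpha$. Since $p$ ranged over the whole ball of radius $r$ about $l^2(\alpha)$, this is the claim.

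I expect the real obstacle here to be organizational rather than analytic. The map $l^2$ is not square: its Jacobian at $\alpha$ has a $(d|V|-|E|)$-dimensional kernel (translations, rotations, genuine flexes), so one must restrict the perturbation to a complement of that kernel before inverting, and must keep the constant $4\sqrt{|E|}$ in the quadratic bound lined up with $\|B\|\le 1/\sigma_{\min}$ so the self-map estimate closes with exactly the stated radius and not something worse. A reader who prefers not to invoke Brouwer on the restricted ball can instead run a topological-degree computation: along $\partial B_R(0,\delta)$ the linear homotopy $H(\epsilon,t)=A\epsilon+tQ(\epsilon)$ has $\|H(\epsilon,t)\|\ge \sigma_{\min}\delta-4\sqrt{|E|}\delta^2 = r$, so it avoids any $p$ with $\|p\|<r$, whence $\deg\bigl(l^2(\alpha+\,\cdot\,)-l^2(\alpha),\,B_R(0,\delta),\,p\bigr)=\deg(A|_R,B_R(0,\delta),p)=\pm1\neq 0$, forcing a solution; the Brouwer version above is just the same fact made self-contained.
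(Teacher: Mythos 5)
Your proof is correct, and it takes a genuinely different route from the paper's. The paper first multiplies the quadratic term by a radial bump function $\eta$ so that the modified map agrees with $l^2(\alpha+\cdot)-l^2(\alpha)$ on the $\delta$-ball but becomes purely linear on the boundary sphere of radius $\sigma_{\min}/(4\sqrt{|E|})$, then restricts to an $|E|$-dimensional sub-ball, normalizes by $Dg(0)^{-1}$, and concludes via the no-retraction theorem (i.e.\ a degree argument on the boundary). You instead restrict from the start to the row space $R$ of $A=Dl^2(\alpha)$, invert the linear part there (correctly noting $\|(A|_R)^{-1}\|=1/\sigma_{\min}$, since $\sigma_{\min}>0$ together with $d|V|\ge|E|$ forces full row rank), and recast solvability of $A\epsilon+Q(\epsilon)=p$ as a fixed point of $G(\epsilon)=B(p-Q(\epsilon))$; the self-map estimate $\|G(\epsilon)\|\le\frac{1}{\sigma_{\min}}(r+4\sqrt{|E|}\,\delta^2)=\delta$ closes exactly because of how $r$ is defined, and Brouwer finishes. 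The two arguments carry the same topological content (degree of the identity versus the fixed-point theorem), but yours buys a few things: it dispenses with the bump function and the attendant check that shell points cannot hit the target ball, it produces the solution already inside the $\delta$-ball rather than merely the larger ball (a strengthening that feeds directly into Lemma \ref{sol dist}), and it makes the final topological step fully self-contained where the paper's ``contradiction in the same way as the typical proof of Brauer's Fixed Point theorem'' is left somewhat informal. One cosmetic caution: your parenthetical that the $(i,j)$-component of $Q(\epsilon)$ is $\|\epsilon_i-\epsilon_j\|^2$ is right, but you should make sure the constant $4\sqrt{|E|}$ you import is the paper's bound on $\|Q(\epsilon)\|$ itself (it is: $\frac{1}{2}\sqrt{|E|}\cdot 8\|\epsilon\|^2=4\sqrt{|E|}\,\|\epsilon\|^2$), since the whole computation hinges on that constant matching $\|B\|\le 1/\sigma_{\min}$.
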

\begin{proof}
    We will apply a sequence of reductions to $l^2$ until a missing value in the desired ball yields a contradiction. This contradiction will be by the fact that there is no retraction of the ball onto its boundary.\\
    First we perturb away the contribution of the quadratic term on the boundary and shift to the origin. \\
    Let $\eta:[0,\frac{\sigma_{\min}}{4\sqrt{|E|}}]\rightarrow [0,1]$ be a smooth bump-style function which is 1 on $[0,\delta]$ and smoothly decreases to 0 at $\frac{\sigma_{\min}}{4\sqrt{E}}$.
    Now define $f$ by
    $$f(\epsilon) = Dl^2(\alpha)\epsilon + \frac{\eta(||\epsilon||)}{2} \stupidsum \left[ 2\epsilon_i^T \epsilon_i-4\epsilon_i^T \epsilon_j+2\epsilon_j^T \epsilon_j\right]e_{ij}.$$
    By our bounding work above, points in the spherical shell between radius $\delta$ and $\frac{\sigma_{\min}}{4\sqrt{E}}$ cannot take values in the desired ball, either before or after applying $\eta$. The desired statement then reduces to showing that the image under $f$ of the closed ball of radius $\frac{\sigma_{\min}}{4\sqrt{|E|}}$ around 0 takes every value in the ball of radius $\sigma_{\min}\delta-4\sqrt{|E|}\delta^2$ about 0.\\
    Next we restrict $f$ to a same-radius sub-ball of the domain ball of the same dimension as the codomain, in such  a way that the restricted Jacobian is invertible at 0. We can do this because the original Jacobian is full rank by assumption. Call this restricted function $g$, so that
    $$g(\epsilon) = Dg(0)\epsilon + \frac{\eta(||\epsilon||)}{2}\stupidsum \left[ 2\epsilon_i^T \epsilon_i-4\epsilon_i^T \epsilon_j+2\epsilon_j^T \epsilon_j\right]e_{ij}.$$
    It now suffices to establish the (stronger) statement that the image under $g$ of the (now lower-dimensional) closed ball of radius $\frac{\sigma_{\min}}{4\sqrt{|E|}}$ around 0 takes every value in the ball of radius $\sigma_{\min}\delta-4\sqrt{|E|}\delta^2$ about 0.\\
    We now define a final function $h$ which is simply $g$ multiplied on the left by $Dg(0)^{-1}$:
    $$h(\epsilon) = \epsilon + Dg(0)^{-1}\frac{\eta(||\epsilon||)}{2}\stupidsum \left[ 2\epsilon_i^T \epsilon_i-4\epsilon_i^T \epsilon_j+2\epsilon_j^T \epsilon_j\right]e_{ij}.$$
    To finish, it will suffice to show that the image under $h$ of the closed ball of radius $\frac{\sigma_{\min}}{4\sqrt{|E|}}$ about 0 contains each point in this ball's interior. But suppose the image did not contain some point $p$, then, since $h$ fixes the boundary, we may obtain a contradiction in the same way as the typical proof of Brauer's Fixed Point theorem.
\end{proof}
Maximizing the radius of the image ball over $\delta$ we then obtain our section's main result.
\begin{theorem}\label{can perturb}
Let $\Sigma$ be an abstract simplicial complex with vertex set $V$ and edge set $E$, which we wish to embed in $\mathbb{E}^d$. Fix an ordering of the vertex coordinates and edges, and let $l^2: \mathbb{R}^{d|V|}\rightarrow \mathbb{R}^{|E|}$ be the map taking an assignment of vertex coordinates to square edge lengths. Let $\alpha\in \mathbb{R}^{d|V|}$ be an initial embedding and let $l^2_*\in\mathbb{R}^{|E|}$ be the desired square edge lengths. Then the following conditions are sufficient for the existence of a realization with the desired edge lengths --- \textbf{potentially self-intersecting}:
\begin{itemize}
    \item $d|V| \geq |E|$
    \item $\sigma_{\min} > 0$, where $\sigma_{\min}$ is the smallest singular value of $Dl^2(\alpha)$
    \item $\rho < \sigma_{\min}^2 / (16\sqrt{|E|})$, where $\rho =||l^2_* - l^2(\alpha)||$.
\end{itemize}
\end{theorem}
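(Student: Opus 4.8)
The plan is to feed the preceding lemma its optimal parameter and read off the conclusion. For every admissible $\delta$ --- meaning $0 < \delta$ and $\sigma_{\min} > 4\sqrt{|E|}\,\delta$ --- the lemma certifies that $l^2$ carries the closed ball of radius $\tfrac{\sigma_{\min}}{4\sqrt{|E|}}$ about $\alpha$ onto everything within distance $r(\delta) := \sigma_{\min}\delta - 4\sqrt{|E|}\,\delta^2$ of $l^2(\alpha)$. So the first and only substantive step is the one-variable optimization of $r(\delta)$ over the admissible interval.

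Carrying that out: $r'(\delta) = \sigma_{\min} - 8\sqrt{|E|}\,\delta$ vanishes at $\delta^\star = \tfrac{\sigma_{\min}}{8\sqrt{|E|}}$, and since $r$ is a downward parabola this is its global maximum. One should check $\delta^\star$ is admissible, i.e. $\delta^\star < \tfrac{\sigma_{\min}}{4\sqrt{|E|}}$ and $\sigma_{\min} > 4\sqrt{|E|}\,\delta^\star = \tfrac{\sigma_{\min}}{2}$; both are immediate from $\sigma_{\min} > 0$. Substituting gives $r(\delta^\star) = \tfrac{\sigma_{\min}^2}{8\sqrt{|E|}} - \tfrac{\sigma_{\min}^2}{16\sqrt{|E|}} = \tfrac{\sigma_{\min}^2}{16\sqrt{|E|}}$.

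It then remains to invoke the lemma with $\delta = \delta^\star$: its conclusion is that the $l^2$-image of the ball about $\alpha$ contains every point within distance $\tfrac{\sigma_{\min}^2}{16\sqrt{|E|}}$ of $l^2(\alpha)$. Since the hypothesis gives $\rho = \|l^2_* - l^2(\alpha)\| < \tfrac{\sigma_{\min}^2}{16\sqrt{|E|}}$, the target value $l^2_*$ is one of those points, so there exists $x$ in that ball with $l^2(x) = l^2_*$; this $x$ is a (possibly self-intersecting) realization with the prescribed square edge lengths, hence the prescribed edge lengths.

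I do not expect a real obstacle: the geometric and topological content was already spent proving the lemma, and what remains is elementary calculus. The only points calling for care are verifying that $\delta^\star$ lies strictly inside the admissible range --- so the lemma genuinely applies --- and the strict-versus-nonstrict distinction in the $\rho$ bound, which causes no trouble because $\rho < r(\delta^\star)$ is strict and so $l^2_*$ lies in the interior of the image ball regardless of whether that ball is taken open or closed.
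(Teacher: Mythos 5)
Your proposal is correct and is exactly the argument the paper intends: the paper's own (omitted) proof is precisely the remark ``Maximizing the radius of the image ball over $\delta$,'' and your optimization at $\delta^\star = \sigma_{\min}/(8\sqrt{|E|})$, the admissibility check, and the substitution yielding $\sigma_{\min}^2/(16\sqrt{|E|})$ fill in that computation correctly.
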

\begin{proof}[]\end{proof}
Recall that Lemma \ref{l2_jac_lemma} may be used to easily compute $Dl^2(\alpha)$. We conclude this section with an observation which will be useful later. 
\begin{lemma}\label{sol dist}
Suppose the conditions of Theorem \ref{can perturb} are satisfied. Then we have a realization $\alpha^*\in \mathbb{R}^{d|E|}$ with the desired edge lengths such that
$$||\alpha^* - \alpha|| \leq \frac{\sigma_{\min}-\sqrt{\sigma_{\min}^2-16\rho\sqrt{|E|}}}{8\sqrt{|E|}}.$$
\end{lemma}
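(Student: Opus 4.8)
The quantity on the right-hand side is the smaller root of a quadratic, and the plan is to recognize it as such and then extract it from the work already done. Write $\phi(t) = \sigma_{\min} t - 4\sqrt{|E|}\,t^{2}$ for the lower bound on $\|l^{2}(\alpha+\epsilon) - l^{2}(\alpha)\|$ established (for $\|\epsilon\| \le \sigma_{\min}/(4\sqrt{|E|})$) in the paragraph preceding the Lemma before Theorem~\ref{can perturb}. This $\phi$ is a downward parabola with zeros at $0$ and $\sigma_{\min}/(4\sqrt{|E|})$, increasing on $[0,\sigma_{\min}/(8\sqrt{|E|})]$ up to its maximum value $\sigma_{\min}^{2}/(16\sqrt{|E|})$. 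The third hypothesis of Theorem~\ref{can perturb} is exactly the statement that $\rho$ lies strictly below this maximum, so $\phi(\delta) = \rho$ has two solutions, the smaller of which is
$$\delta_{0} \;=\; \frac{\sigma_{\min} - \sqrt{\sigma_{\min}^{2} - 16\rho\sqrt{|E|}}}{8\sqrt{|E|}} \;\in\; \Bigl[0, \tfrac{\sigma_{\min}}{8\sqrt{|E|}}\Bigr),$$
which is exactly the bound asserted in Lemma~\ref{sol dist}. So it suffices to produce a realization of $l^{2}_{*}$ inside the closed ball of radius $\delta_{0}$ about $\alpha$.

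The key observation is that the Lemma preceding Theorem~\ref{can perturb} in fact proves slightly more than it states: run with a parameter $\delta$ (any admissible one, i.e. with $\sigma_{\min} > 4\sqrt{|E|}\,\delta$), its proof produces a preimage of each point of the open ball $B(l^{2}(\alpha),\phi(\delta))$ lying not merely in the closed ball of radius $\sigma_{\min}/(4\sqrt{|E|})$ about $\alpha$, but already in the closed ball of radius $\delta$ about $\alpha$. Indeed, on that inner ball the modified map $f$ of the proof equals $l^{2}$, since there $\eta\equiv 1$; and on the shell $\delta\le\|\epsilon\|\le\sigma_{\min}/(4\sqrt{|E|})$ one has $\|f(\alpha+\epsilon)-f(\alpha)\| \ge \sigma_{\min}\|\epsilon\| - 4\sqrt{|E|}\,\|\epsilon\|^{2}\eta(\|\epsilon\|) \ge \phi(\delta)$: for $\|\epsilon\|\le\sigma_{\min}/(8\sqrt{|E|})$ this is immediate from $\eta\le 1$ and the monotonicity of $\phi$, while for $\|\epsilon\|\ge\sigma_{\min}/(8\sqrt{|E|})$ it follows — after choosing the bump $\eta$ so that $\|\epsilon\|\,\eta(\|\epsilon\|)\le\sigma_{\min}/(8\sqrt{|E|})$ throughout, which is compatible with $\eta\equiv 1$ on $[0,\delta]$ precisely because in the regime we use $\delta<\sigma_{\min}/(8\sqrt{|E|})$ — from the resulting estimate $\|f(\alpha+\epsilon)-f(\alpha)\| \ge \tfrac12\sigma_{\min}\|\epsilon\| \ge \sigma_{\min}^{2}/(16\sqrt{|E|}) \ge \phi(\delta)$. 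Thus no point of the shell maps into $B(l^{2}(\alpha),\phi(\delta))$, so the preimage supplied by the retraction argument lies in the closed ball of radius $\delta$, where $f=l^{2}$, hence is a genuine $l^{2}$-preimage.

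To finish, apply this for every $\delta\in(\delta_{0},\sigma_{\min}/(8\sqrt{|E|}))$: there $\phi(\delta)>\phi(\delta_{0})=\rho$, so $l^{2}_{*}\in B(l^{2}(\alpha),\phi(\delta))$ and we obtain a realization $\alpha^{*}_{\delta}$ of $l^{2}_{*}$ with $\|\alpha^{*}_{\delta}-\alpha\|\le\delta$. The sets $(l^{2})^{-1}(\{l^{2}_{*}\})\cap\{x:\|x-\alpha\|\le\delta\}$ are then nonempty, compact (closed, bounded, $l^{2}$ continuous), and nested decreasing as $\delta\downarrow\delta_{0}$, so by the finite intersection property their total intersection is nonempty; any $\alpha^{*}$ in it realizes the desired square edge lengths and satisfies $\|\alpha^{*}-\alpha\|\le\delta_{0}$, which is the claim (and if $\rho=0$ one simply takes $\alpha^{*}=\alpha$).

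The step I expect to be the main obstacle is the second paragraph — pinning the preimage inside the ball of radius $\delta$ rather than the larger ball of radius $\sigma_{\min}/(4\sqrt{|E|})$. Conceptually it is just a careful reading of the previous proof, but the shell estimate has to be controlled near the outer radius, where $\phi$ has fallen back toward $0$ and one must rely on the decay of the bump function $\eta$, rather than on $\phi$ alone, to keep $\|f\|$ above $\rho$. The passage to the limit in the last paragraph is needed because the previous Lemma covers only the open ball $B(l^{2}(\alpha),\phi(\delta))$, which is also why the conclusion is the non-strict bound $\|\alpha^{*}-\alpha\|\le\delta_{0}$.
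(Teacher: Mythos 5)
Your proposal is correct and follows the same basic route as the paper: identify the claimed bound as the smaller root of \(4\sqrt{|E|}\,\delta^2-\sigma_{\min}\delta+\rho=0\) and feed admissible values of \(\delta\) into the perturbation lemma. The difference is that the paper's proof is a two-line gloss (``by Lemma 3, we need only move a distance \(\delta\)\dots taking the infimum gives our result''), and you have supplied two justifications it silently assumes. First, Lemma 3 as stated only places the preimage of \(l^2_*\) in the closed ball of radius \(\sigma_{\min}/(4\sqrt{|E|})\), not of radius \(\delta\); your second paragraph correctly re-examines its proof to localize the preimage to the \(\delta\)-ball, and in doing so you rightly notice that the shell estimate near the outer radius cannot rest on \(\phi\) alone (which falls back to \(0\) there) but needs a constraint on the bump function such as \(t\,\eta(t)\le\sigma_{\min}/(8\sqrt{|E|})\) — a condition the paper never imposes, and which makes its assertion that shell points avoid the target ball ``either before or after applying \(\eta\)'' actually hold after applying \(\eta\). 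Second, your nested-compact-sets argument is the honest version of ``taking the infimum,'' converting ``a realization within \(\delta\) for every \(\delta>\delta_0\)'' into a single realization within \(\delta_0\). Both patches are genuine improvements in rigor rather than detours; the underlying argument is the paper's.
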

\begin{proof}
    By Lemma 3, we need only move a distance $\delta$ such that 
    $$
        \sigma_{\min} > 4\sqrt{|E|}\delta \text{ and }
        \rho < \sigma_{\min}\delta - 4\sqrt{|E|}\delta^2.$$
    Focusing on the second condition, this will be satisfied between the roots of 
    $4\sqrt{|E|}\delta^2-\sigma_{\min}\delta +\rho = 0$, which occur at 
    $$\delta = \frac{\sigma_{\min} \pm \sqrt{\sigma_{\min}^2-16\rho\sqrt{|E|}}}{8\sqrt{|E|}}.$$
    Note that the discriminant is positive by the third condition of Theorem \ref{can perturb}, and that both roots are positive. Note also that the $\sigma_{\min} > 4\sqrt{|E|}$ condition is satisfied throughout the whole open interval between the roots. Thus we can take our $\delta$ to be any value in the open interval between the roots, and taking the infimum gives our result.
\end{proof}

\section{Enforcing non-self-intersection}\label{sec-self-intersection}
We now extend Theorem \ref{can perturb} to give sufficient conditions for the existence of a \textit{non-self-intersecting} realization with the desired edge lengths. This is not so difficult, although implementing the computations takes some work (see Appendix \ref{convex distance}).\\

First we formalize self-intersection. Let $\Sigma$ be an abstract simplicial complex. Two simplices $\sigma_1$ and $\sigma_2$ of $\Sigma$ are \textit{non-adjacent} if their vertex sets are disjoint. For a realization $\alpha$ of $\Sigma$ in $\mathbb{E}^d$, let $d_\alpha(\sigma_1, \sigma_2)$ be the minimum distance between a point from each of the realizations of $\sigma_1$ and $\sigma_2$.\\
Define the \textit{collision distance} of realization $\alpha$ to be 
$$\text{CD}_\alpha(\Sigma) = \min_{\substack{\sigma_1,\sigma_2\in\Sigma\\\text{non-adjacent}}}d_\alpha(\sigma_1, \sigma_2).$$
A realization $\alpha$ of $\Sigma$ is self-intersecting exactly when $\text{CD}_\alpha(\Sigma) = 0$.\footnote{It might be more pleasing, though not needed, to define the collision distance for a self-intersecting realization $\alpha$ to be minus the minimum distance, in some sense, to a non-self-intersecting realization $\alpha^*$.}
\begin{lemma}
Suppose vertex $v$ is moved, in realization $\alpha$, to $v^*$, yielding realization $\alpha^*$. Then $$\text{CD}_{\alpha^*}(\Sigma) \geq \text{CD}_\alpha(\Sigma) - ||v^* - v||.$$
Briefly, moving a vertex by $\delta$ can at most decrease collision distance by $\delta$.
\end{lemma}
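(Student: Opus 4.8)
The plan is to reduce the claimed inequality on collision distances to a statement about how distances between points shift when one vertex moves. The key observation is that $\mathrm{CD}_{\alpha^*}(\Sigma)$ is a minimum of $d_{\alpha^*}(\sigma_1,\sigma_2)$ over non-adjacent pairs, so it suffices to show that for \emph{each} non-adjacent pair $(\sigma_1,\sigma_2)$ we have $d_{\alpha^*}(\sigma_1,\sigma_2) \geq d_\alpha(\sigma_1,\sigma_2) - \|v^*-v\|$; taking the minimum over all such pairs on both sides then gives the result, since the pair achieving the minimum on the right is in particular one of the pairs bounding the left.

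Next I would fix a non-adjacent pair $(\sigma_1,\sigma_2)$ and split into cases by where the moved vertex $v$ lies. If $v$ is a vertex of neither $\sigma_1$ nor $\sigma_2$, then the realizations of both simplices are unchanged, so $d_{\alpha^*}(\sigma_1,\sigma_2) = d_\alpha(\sigma_1,\sigma_2)$ and the bound is immediate. Since $\sigma_1$ and $\sigma_2$ have disjoint vertex sets, $v$ can be a vertex of at most one of them, say $\sigma_1$; then the realization of $\sigma_2$ is unchanged. So I only need: moving one vertex of a simplex by $\delta := \|v^*-v\|$ moves the realized simplex (as a point set) by at most $\delta$ in the sense that any point of the new simplex is within $\delta$ of some point of the old one, and vice versa.

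For that geometric fact, recall the realization of a simplex $\sigma$ with vertices $w_0,\dots,w_k$ is the convex hull $\{\sum_i \lambda_i w_i : \lambda_i \geq 0, \sum_i \lambda_i = 1\}$. If only $w_0$ moves to $w_0^*$, then a point $p = \sum_i \lambda_i w_i$ of the old simplex corresponds to $p^* = \lambda_0 w_0^* + \sum_{i\geq 1}\lambda_i w_i$ in the new one, and $\|p - p^*\| = \lambda_0 \|w_0 - w_0^*\| \leq \delta$. Hence every point of the old simplex lies within $\delta$ of a point of the new, and symmetrically. Now take points $p_1 \in \sigma_1^{\alpha}$, $p_2 \in \sigma_2^{\alpha}$ achieving $d_\alpha(\sigma_1,\sigma_2)$; let $p_1^*$ be the corresponding point of $\sigma_1^{\alpha^*}$ with $\|p_1 - p_1^*\| \leq \delta$, and note $p_2 \in \sigma_2^{\alpha^*}$. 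Then $d_{\alpha^*}(\sigma_1,\sigma_2) \leq \|p_1^* - p_2\| \leq \|p_1 - p_2\| + \delta = d_\alpha(\sigma_1,\sigma_2) + \delta$; running the same argument with the roles of $\alpha$ and $\alpha^*$ swapped gives $d_\alpha(\sigma_1,\sigma_2) \leq d_{\alpha^*}(\sigma_1,\sigma_2) + \delta$, which rearranges to the desired per-pair inequality.

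I do not expect a serious obstacle here; the only mild subtlety is being careful that the minimum defining $\mathrm{CD}$ is over a common index set (the non-adjacency condition depends only on the abstract complex $\Sigma$, not on the realization, so the same pairs are compared before and after), and that $v$ belongs to at most one simplex of a non-adjacent pair. The convex-hull displacement estimate is the technical heart but is a one-line computation once the barycentric-coordinate description is written down.
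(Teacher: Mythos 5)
Your proposal is correct and follows essentially the same route as the paper: reduce to the per-pair inequality, note $v$ lies in at most one simplex of a non-adjacent pair, and use barycentric coordinates to see that the point with the same coordinates moves by at most $\lambda_0\|v^*-v\|\leq\|v^*-v\|$, finishing with the triangle inequality. The paper simply applies this directly to a closest pair in $\alpha^*$ rather than phrasing it as a two-sided Hausdorff-type estimate, but the content is identical.
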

\begin{proof}
    It suffices to show, for any $\sigma_1, \sigma_2$ non-adjcent in $\Sigma$, that $$d_{\alpha^*}(\sigma_1, \sigma_2) \geq d_{\alpha}(\sigma_1, \sigma_2) - ||v^* - v||.$$
    Let $m,n$ be the respective dimensions of $\sigma_1$, $\sigma_2$. The case of interest is when exactly one contains $v$ (if neither do the claim is clear and it can't be both since they are non-adjacent). Without loss of generality, suppose $\sigma_1$ contains $v$ with initial vertex coordinates $v, v_2, ..., v_{m+1}$ and let $\sigma_2$ have coordinates $w_1, ..., w_{n+1}$.\\
    
    Let $[\alpha_1 : ... : \alpha_{m+1}]$ and $[\beta_1: ... : \beta_{n+1}]$ be the barycentric coordinates of a closest pair in $\alpha^*$. Expanding out, we then have
    \begin{align*}
        d_{\alpha^*}(\sigma_1, \sigma_2) + \alpha_1||v^* - v||&= \left|\left|\alpha_1 v_1^* + \sum_{i > 1} \alpha_i v_i -\sum_i\beta_i w_i\right|\right| + ||\alpha_1(v_1 - v_1^*)|| \\
        & \geq  
        \left|\left|\alpha_1 v_1 + \sum_{i > 1} \alpha_i v_i -\sum_i\beta_i w_i\right|\right| \\
        &\geq d_\alpha(\sigma_1, \sigma_2).
    \end{align*}
    Thus 
    \begin{align*}
        d_{\alpha^*}(\sigma_1, \sigma_2) &\geq d_\alpha(\sigma_1, \sigma_2) -\alpha_1||v^* - v||\\
        &\geq d_\alpha(\sigma_1, \sigma_2) - ||v^* - v||.
    \end{align*}
\end{proof}
\begin{lemma}
Let $\Sigma$ be an abstract simplicial complex with vertex set $V$ and edge set $E$. Fixing an ordering of vertex coordinates, let $\alpha\in\mathbb{R}^{d|V|}$ be a non-self-intersecting realization in $\mathbb{E}^d$. Let $\alpha^*\in\mathbb{R}^{d|V|}$ be another realization. If
$$||\alpha^* - \alpha|| < \frac{1}{\sqrt{|V|}}\text{CD}_\alpha(\Sigma),$$
then $\alpha^*$ is also non-self-intersecting.
\end{lemma}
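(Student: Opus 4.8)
The plan is to bound the total displacement of all vertices and feed it into the previous lemma, iterating one vertex at a time. First I would write $\alpha$ and $\alpha^*$ in terms of their per-vertex coordinates: let $v_k$ and $v_k^*$ denote the positions of the $k$th vertex in $\alpha$ and $\alpha^*$ respectively, so that $\|\alpha^* - \alpha\|^2 = \sum_{k=1}^{|V|} \|v_k^* - v_k\|^2$. Define a sequence of intermediate realizations $\alpha = \alpha^{(0)}, \alpha^{(1)}, \dots, \alpha^{(|V|)} = \alpha^*$, where $\alpha^{(k)}$ agrees with $\alpha^*$ on the first $k$ vertices and with $\alpha$ on the rest; so $\alpha^{(k)}$ is obtained from $\alpha^{(k-1)}$ by moving the $k$th vertex from $v_k$ to $v_k^*$, a displacement of $\|v_k^* - v_k\|$.

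Next I would apply the preceding lemma repeatedly along this chain. Each single-vertex move decreases the collision distance by at most the displacement of that vertex, so by induction
\begin{align*}
\text{CD}_{\alpha^*}(\Sigma) &\geq \text{CD}_\alpha(\Sigma) - \sum_{k=1}^{|V|} \|v_k^* - v_k\|.
\end{align*}
Then I would control the sum of displacements by the norm $\|\alpha^* - \alpha\|$ via Cauchy--Schwarz: $\sum_{k=1}^{|V|} \|v_k^* - v_k\| \leq \sqrt{|V|}\,\bigl(\sum_{k=1}^{|V|}\|v_k^*-v_k\|^2\bigr)^{1/2} = \sqrt{|V|}\,\|\alpha^*-\alpha\|$. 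Combining, $\text{CD}_{\alpha^*}(\Sigma) \geq \text{CD}_\alpha(\Sigma) - \sqrt{|V|}\,\|\alpha^*-\alpha\|$, and the hypothesis $\|\alpha^*-\alpha\| < \text{CD}_\alpha(\Sigma)/\sqrt{|V|}$ makes the right-hand side strictly positive, so $\text{CD}_{\alpha^*}(\Sigma) > 0$, which is exactly the statement that $\alpha^*$ is non-self-intersecting.

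There is no serious obstacle here; the only thing to be a little careful about is the telescoping application of the previous lemma, since that lemma as stated moves a single vertex, and one must check that the intermediate realizations $\alpha^{(k)}$ are legitimate realizations of $\Sigma$ (they are — a realization is just an assignment of coordinates, with no constraint beyond that) so the lemma applies at each step regardless of whether the intermediate realizations are themselves self-intersecting. The Cauchy--Schwarz step is where the $\sqrt{|V|}$ factor enters, and it is tight only when all vertices move by equal amounts; one could remark that in practice the bound is often far from tight.
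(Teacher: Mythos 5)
Your proposal is correct and follows essentially the same route as the paper: move the vertices one at a time, telescope the single-vertex lemma to get $\text{CD}_{\alpha^*}(\Sigma) \geq \text{CD}_\alpha(\Sigma) - \sum_i \|\epsilon_i\|$, and bound the sum of displacements by $\sqrt{|V|}\,\|\alpha^*-\alpha\|$ via Cauchy--Schwarz. Your added remark that the single-vertex lemma needs no non-self-intersection hypothesis on the intermediate realizations is a worthwhile point the paper leaves implicit.
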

\begin{proof}
    Let $\epsilon = ||\alpha^* - \alpha||$, and let $\epsilon_i$ be the displacement of the $i$th vertex from $\alpha$ to $\alpha^*$.
    If we consider moving the vertices one by one, we see 
    \begin{align*}
        \text{CD}_{\alpha^*}(\Sigma) &\geq \text{CD}_{\alpha}(\Sigma) - \sum ||\epsilon_i||,\\
        \intertext{and so we are safe if}
        \text{CD}_{\alpha}(\Sigma) &> \sum ||\epsilon_i||.\\
    \end{align*}
    We then obtain our result by weakening via following inequality:
    $$\sum ||\epsilon_i|| \leq \sqrt{V}||\epsilon||.$$
\end{proof}
Combining this result with Theorem \ref{can perturb} and Lemma \ref{sol dist}, we obtain our desired existence test.
\begin{theorem}\label{main}
    Let $\Sigma$ be an abstract simplicial complex with vertex set $V$ and edge set $E$, which we wish to embed in $\mathbb{E}^d$. Fix an ordering of the vertex coordinates and edges, and let $l^2: \mathbb{R}^{d|V|}\rightarrow \mathbb{R}^{|E|}$ be the map taking an assignment of vertex coordinates to square edge lengths. Let $\alpha\in \mathbb{R}^{d|V|}$ be an initial non-self-intersecting realization and let $l^2_*\in\mathbb{R}^{|E|}$ be the desired square edge lengths. Then the following conditions are sufficient for the existence of a realization with the desired edge lengths and no self-intersections:
\begin{itemize}
    \item $d|V| \geq |E|$
    \item $\sigma_{\min} > 0$, where $\sigma_{\min}$ is the smallest singular value of $Dl^2(\alpha)$
    \item $\rho < \sigma_{\min}^2 / (16\sqrt{|E|})$, where $\rho =||l^2_* - l^2(\alpha)||$
    \item $\frac{\sigma_{\min}-\sqrt{\sigma_{\min}^2-16\rho\sqrt{|E|}}}{8\sqrt{|E|}} < \frac{1}{\sqrt{|V|}}\text{CD}_\alpha(\Sigma).$
\end{itemize}
\end{theorem}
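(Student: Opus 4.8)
The plan is to compose three results already in hand. Lemma \ref{sol dist}, which strengthens Theorem \ref{can perturb} by pinning down how far the new realization sits from $\alpha$, produces a realization with the prescribed edge lengths close to $\alpha$; the lemma immediately preceding this theorem then upgrades ``close to a non-self-intersecting realization'' to ``non-self-intersecting.'' The four bullets are arranged precisely so that this composition goes through: the first three are exactly the hypotheses of Theorem \ref{can perturb} (hence of Lemma \ref{sol dist}), and the fourth bounds the displacement estimate of Lemma \ref{sol dist} by the non-self-intersection radius appearing in that preceding lemma.

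In detail, I would first apply Lemma \ref{sol dist} using the first three bullets to obtain a realization $\alpha^*\in\mathbb{R}^{d|V|}$ with $l^2(\alpha^*)=l^2_*$ and
$$||\alpha^*-\alpha||\;\leq\;\frac{\sigma_{\min}-\sqrt{\sigma_{\min}^2-16\rho\sqrt{|E|}}}{8\sqrt{|E|}}.$$
The fourth bullet asserts that this upper bound is strictly smaller than $\tfrac{1}{\sqrt{|V|}}\,\text{CD}_\alpha(\Sigma)$, so chaining the two inequalities gives $||\alpha^*-\alpha||<\tfrac{1}{\sqrt{|V|}}\,\text{CD}_\alpha(\Sigma)$. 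Since $\alpha$ is non-self-intersecting by hypothesis, the preceding lemma applies to $\alpha^*$ and shows it is non-self-intersecting. Hence $\alpha^*$ realizes the desired edge lengths with no self-intersections, which is what is claimed.

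This is essentially bookkeeping, so I expect no genuine obstacle; the two points worth care are that Lemma \ref{sol dist} yields a non-strict distance bound while the non-self-intersection lemma requires a strict one — which is exactly why the fourth bullet is phrased with a strict inequality, the two estimates then composing to the strict bound that is needed — and that one must use the specific near-$\alpha$ solution furnished by Lemma \ref{sol dist} rather than an arbitrary preimage of $l^2_*$ under $l^2$, since only the former is guaranteed to land inside the non-self-intersection radius.
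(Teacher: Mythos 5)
Your proposal is correct and matches the paper's intended argument exactly: the paper proves Theorem \ref{main} by the same one-line composition ("Combining this result with Theorem \ref{can perturb} and Lemma \ref{sol dist}..."), chaining the distance bound of Lemma \ref{sol dist} with the collision-distance lemma via the fourth bullet. Your attention to the strict-versus-non-strict inequality composition is a sound bookkeeping check and does not diverge from the paper's route.
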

\section{Proving existence by computer}
Suppose we have an abstract simplicial complex $\Sigma$, and we wish to prove the existence of a non-self-intersecting realization in $\mathbb{E}^d$ with desired square edge lengths $l_*^2$. For example, we might wish to prove the existence of the regular icosahedron in the usual three space. It is tempting to apply Theorem \ref{main} by computer in the following way:
\begin{enumerate}
    \item Obtain a realization $\alpha$ of $\Sigma$ in $\mathbb{E}^d$ which is non-intersecting (perhaps not rigorously proven) and with edge lengths reasonably correct.
    \item Prove $\alpha$ is non-self-intersecting.
    \item Prove $\alpha$ satisfies the 4 inequalities of Theorem \ref{main}.
\end{enumerate}
This is the approach followed by our code. In this section we describe in more detail how the steps may be rigorously carried out by computer. Any one of the verification steps may fail along the way, in which case the proof is aborted (or one modifies the described process to salvage it), but for cleaner exposition we describe the process as if each stage will be successful. \\

We will suppose the square lengths in $l_*^2$ are rational for this section. This is what our software supports, but this discussion (and the software) would generalize to any countable ordered subfield of $\mathbb{R}$ with a little work.

\subsection{Initial realization}
The coordinates of an approximate realization may be directly input by the user. Alternatively, we have found it practical to generate coordinates by a physics-inspired simulation:
\begin{enumerate}
    \item Randomly initialize the vertex coordinates.
    \item Iterate time steps where the vertices move according to repulsive forces between all vertices and spring forces between vertices joined by an edge (with spring length desired length).
    \item Iterate more time steps with just the spring forces.
    \item Start again if the resulting realization is heuristically self-intersecting.
    \item Approximate the coordinates by fractions so we have an exact representation the computer can work with for the remaining steps.
\end{enumerate}

\subsection*{4.2 Proving non-self-intersection}
At this point we have an approximate realization $\alpha$ with rational coordinates.\\

To verify it is non-self-intersecting, it suffices to check that each pair of non-adjacent simplices in $\Sigma$ is non-self-intersecting.\footnote{And we can save time by only checking that maximal pairs of non-adjacent simplices are non-self-intersecting.} This in turn may be accomplished by computing $d_\alpha(\sigma_1, \sigma_2)$ for each such pair as described in Appendix \ref{convex distance}.

\subsection{Checking the inequalities}\label{checking inequalities}
The final step is to attempt to prove it satisfies the 4 inequalities of Theorem \ref{main}.\\

It is clear how to test the first inequality, $d|V| \geq |E|$.\\

For the second inequality, $\sigma_{\min} > 0$, we may apply the process of Appendix \ref{bound sigma} to obtain a good rational interval $[\sigma_l, \sigma_u]$ containing $\sigma_{\min}$ and then check $\sigma_l > 0$.\\

For the third inequality, 
$$\rho < \sigma_{\min}^2 / (16\sqrt{|E|}),\ \text{where } \rho =||l^2_* - l^2(\alpha)||,$$
we can first compute rational intervals around $\rho$ and $\sqrt{|E|}$ using the process of Appendix \ref{bound sqrt}, and then, together with our interval $[\sigma_l, \sigma_u]$ from before, verify the inequality with interval arithmetic.\\

Finally, we have the fourth inequality:
$$\frac{\sigma_{\min}-\sqrt{\sigma_{\min}^2-16\rho\sqrt{|E|}}}{8\sqrt{|E|}} < \frac{1}{\sqrt{|V|}}\text{CD}_\alpha(\Sigma).$$
To verify this, we can first obtain rational intervals around $\sqrt{|V|}$ and $\text{CD}_{\alpha}(\Sigma)$. Note that we can find $\left(\text{CD}_{\alpha}(\Sigma)\right)^2$ exactly using the process of Appendix \ref{convex distance}. Then, together with our intervals around $\sigma_{\min}$, $\rho$, and $\sqrt{|E|}$ from before, we can do interval arithmetic on the left and right hand sides to verify the inequality.

\section{Example proofs using the supplemental code}\label{examples}
The code is available as a Python 3 package called \texttt{shape-existence}. To use it, you can first install Python 3 and then run the package install command from your terminal: $$\texttt{pip install shape-existence}$$
The following examples give code which would be entered into a file saved with extension \texttt{.py}. To run the \texttt{.py} file, one could navigate to its folder in terminal and run $\texttt{python3 [filename]}$, or, likely, just double click on the file. 
\subsection{30-60-90 triangle}
We prove the existence of a $\pi/6-\pi/3-\pi/2$ triangle with side lengths $1$, $\frac{1}{2}$, and $\frac{\sqrt{3}}{2}$.\\
\par\noindent\rule{\textwidth}{0.4pt}
\tiny
\begin{verbatim}
from shape_existence.complexes_and_proofs import AbstractSimplicialComplex, Fraction
ASC = AbstractSimplicialComplex

triangle = ASC(mode = "maximal_simplices", data = [["a", "b"], ["b", "c"], ["c", "a"]])
square_sides= {("a", "b") : 1, ("b", "c") : Fraction(1,4), ("c", "a") : Fraction(3,4)}
triangle_realized = triangle.heuristic_embed(dim = 2, desired_sq_lengths = square_sides, final_round_digits = 8)
triangle_realized.save_as_obj("triangle_30_60_90.obj", "./obj_files/")
triangle_realized.prove_existence(desired_sq_lengths = square_sides, verbose = True)
\end{verbatim}
\par\noindent\rule{\textwidth}{0.4pt}
\vspace{.1cm}
\normalsize
Running this code yields the following text output (because \texttt{verbose} was set to \texttt{True} in the call to \texttt{prove\_existence}):\\
\par\noindent\rule{\textwidth}{0.4pt}
\tiny
\begin{verbatim}
Attempting to prove existence

Starting realization:
	Abstract data:	
		mode: maximal_simplices
		data: [['a', 'b'], ['b', 'c'], ['c', 'a']]
	Coordinate Data:
		b : [3914567 / 6250000, 63520223 / 100000000]
		a : [27779707 / 50000000, -226433 / 625000]
		c : [104057459 / 100000000, 35519863 / 100000000]

Desired square lengths:
	('a', 'b') : 1
	('b', 'c') : 1 / 4
	('c', 'a') : 3 / 4

Checking inequality 1:
	 d  = 2
	|V| = 3
	|E| = 3
	Success: d|V| >= |E|

Checking self-intersection:
	Square collision distance = 18749999713556450281734401664681 / 99999999862479730000000000000000
	Collision distance in [43301269 / 100000000, 4330127 / 10000000] ~ [0.43301, 0.43301]
	Success: starting realization non-self-intersecting

Checking inequality 2:
	sigma_min in [2651 / 2000, 13257 / 10000] ~ [1.3255, 1.3257]
	Success: sigma_min > 0

Checking inequality 3:
	rho_squared = 6139541520423783 / 50000000000000000000000000000000
	rho in [6925689 / 625000000000000, 13175689 / 625000000000000] ~ [0.0, 0.0]
	sigma_min ^ 2 / (16 * E ^ .5) in [175695025 / 2771281296, 175748049 / 2771281280] ~ [0.0634, 0.06342]
	Success: rho < sigma_min ^ 2 / (16 * E ^ .5)

Checking inequality 4:
	LHS NUM := sigma_min - [sigma_min ^ 2 - 16 * rho * |E| ^ .5 ] ^ .5 in [-19989 / 100000000, 20023 / 100000000] ~ [-0.0002, 0.0002]
	LHS DEN := 8 * |E| ^ .5 in [4330127 / 312500, 173205081 / 12500000] ~ [13.85641, 13.85641]
	LHS     := (LHS NUM) / (LHS DEN) in [-19989 / 1385640640, 20023 / 1385640640] ~ [-1e-05, 1e-05]
	CD / |V| ^ .5 in [43301269 / 173205081, 1 / 4] ~ [0.25, 0.25]
	Success: LHS < CD / |V| ^ .5

Success: existence proven
(187489 / 999956, ([0, 0], [54125 / 249989, 187489 / 499978]))
\end{verbatim}
\par\noindent\rule{\textwidth}{0.4pt}
\normalsize
\vspace{.1cm}
The above proof log documents a successful proof of existence using Theorem \ref{main}.\\
Notes on the code:
\begin{itemize}
    \item We began by creating our triangle as an abstract simplicial complex using the \texttt{AbstractSimplicialComplex} class (renamed \texttt{ASC}). We used mode ``maximal\_simplices" to specify the structure, where the maximal simplices are 1-simplices, the three sides of the triangle. 
    \item We then specified our desired square side lengths, and used them to create a heuristic embedding of the triangle in two dimensions using the \texttt{heuristic\_embed} function (under the hood this is doing a physics-inspired simulation like described above). We specified the square edge lengths using the provided package-provided Fraction class.
    \item The heuristic embedding procedure works in floating point numbers, and then at the end converts to Fractions to yield an exact realization (which hopefully approximates the desired edge lengths). The \texttt{final\_round\_digits} parameter specifies how many floating point digits are preserved in this final conversion.
    \item We used the package-provided \texttt{save\_as\_obj} function, which takes in the type \texttt{RealizedSimplicialComplex} (what \texttt{heuristic\_embed} outputs) and saves a 3D model in the \texttt{obj} format.\footnote{These files can, in 2023 at least, be viewed and rotated on Macs with just the space-bar `preview'.} These files might help convince you that the heuristic embedding is reasonable. We previously created a folder called \texttt{obj\_files} in the directory where the code was run.
\end{itemize}
Notes on the proof log, which is hopefully self-explanatory:
\begin{itemize}
    \item The proof log begins by describing the abstract simplicial complex, starting approximate realization, and desired square edge lengths.
    \item The computed collision distance for the approximate realization is very close to $\sqrt{3}/4$, the shortest altitude of the desired triangle. 
\end{itemize}
\subsection{Icosahedron}
We can follow the same process to prove the existence of the icosahedron.
\par\noindent\rule{\textwidth}{0.4pt}
\tiny
\begin{verbatim}
from shape_existence.complexes_and_proofs import AbstractSimplicialComplex, Fraction
ASC = AbstractSimplicialComplex

icosahedron = ASC(mode = "maximal_simplices",
                data = [["t", "a1", "a2"], ["t", "a2", "a3"], ["t", "a3", "a4"], ["t", "a4", "a5"], ["t", "a5", "a1"],
                        ["a1", "a2", "b1"], ["a2", "a3", "b2"], ["a3", "a4", "b3"], ["a4", "a5", "b4"], ["a5", "a1", "b5"],
                        ["b1", "b2", "a2"], ["b2", "b3", "a3"], ["b3", "b4", "a4"], ["b4", "b5", "a5"], ["b5", "b1", "a1"],
                        ["b", "b1", "b2"], ["b", "b2", "b3"], ["b", "b3", "b4"], ["b", "b4", "b5"], ["b", "b5", "b1"]])
icosahedron_realized = icosahedron.heuristic_embed(dim = 3, desired_sq_lengths = {"default" : 1}, final_round_digits = 9)
icosahedron_realized.save_as_obj("icosahedron.obj", "./obj_files/")
icosahedron_realized.prove_existence(desired_sq_lengths = {"default" : Fraction(1)}, verbose = True)
\end{verbatim}
\par\noindent\rule{\textwidth}{0.4pt}
\vspace{.1cm}
\normalsize
And here is the resulting successful proof log:\\
\par\noindent\rule{\textwidth}{0.4pt}
\tiny
\begin{verbatim}
Attempting to prove existence

Starting realization:
	Abstract data:	
		mode: maximal_simplices
		data: [['t', 'a1', 'a2'], ['t', 'a2', 'a3'], ['t', 'a3', 'a4'], ['t', 'a4', 'a5'], ['t', 'a5', 'a1'], ['a1', 'a2', 'b1'],
    ['a2', 'a3', 'b2'], ['a3', 'a4', 'b3'], ['a4', 'a5', 'b4'], ['a5', 'a1', 'b5'], ['b1', 'b2', 'a2'], ['b2', 'b3', 'a3'], 
    ['b3', 'b4', 'a4'], ['b4', 'b5', 'a5'], ['b5', 'b1', 'a1'], ['b', 'b1', 'b2'], ['b', 'b2', 'b3'], ['b', 'b3', 'b4'], 
    ['b', 'b4', 'b5'], ['b', 'b5', 'b1']]
	Coordinate Data:
		b3 : [78001449 / 250000000, 1442907833 / 1000000000, -9027 / 10000000]
		a3 : [-43340621 / 200000000, 120056377 / 200000000, -103120161 / 1000000000]
		b4 : [693725959 / 1000000000, 1458019437 / 1000000000, 9232517 / 10000000]
		a1 : [794405291 / 1000000000, -156807867 / 1000000000, 908071661 / 1000000000]
		a4 : [-223966263 / 1000000000, 43705607 / 40000000, 383621077 / 500000000]
		a2 : [412685129 / 1000000000, -10744967 / 62500000, -16082739 / 1000000000]
		b2 : [176369499 / 250000000, 661366989 / 1000000000, -485024109 / 1000000000]
		t : [-80871507 / 500000000, 47311007 / 500000000, 378930187 / 500000000]
		b1 : [166297169 / 125000000, 193459789 / 1000000000, 69963403 / 500000000]
		a5 : [400933091 / 1000000000, 24989319 / 40000000, 139219307 / 100000000]
		b : [634077051 / 500000000, 23829559 / 20000000, 74654293 / 500000000]
		b5 : [1323114193 / 1000000000, 685818079 / 1000000000, 505144561 / 500000000]

Desired square lengths:
	default : 1

Checking inequality 1:
	 d  = 3
	|V| = 12
	|E| = 30
	Success: d|V| >= |E|

Checking self-intersection:
	Square collision distance = 62499999704038118952979912461140044476898687719549649 / 86372875554807236480463023424935793500000000000000000
	Collision distance in [2126627 / 2500000, 85065081 / 100000000] ~ [0.85065, 0.85065]
	Success: starting realization non-self-intersecting

Checking inequality 2:
	sigma_min in [7653 / 5000, 3827 / 2500] ~ [1.5306, 1.5308]
	Success: sigma_min > 0

Checking inequality 3:
	rho_squared = 3928473408881149031 / 50000000000000000000000000000000000
	rho in [443197101 / 50000000000000000, 943197101 / 50000000000000000] ~ [0.0, 0.0]
	sigma_min ^ 2 / (16 * E ^ .5) in [6507601 / 243432248, 14645929 / 547722557] ~ [0.02673, 0.02674]
	Success: rho < sigma_min ^ 2 / (16 * E ^ .5)

Checking inequality 4:
	LHS NUM := sigma_min - [sigma_min ^ 2 - 16 * rho * |E| ^ .5 ] ^ .5 in [-799 / 4000000, 4011 / 20000000] ~ [-0.0002, 0.0002]
	LHS DEN := 8 * |E| ^ .5 in [547722557 / 12500000, 273861279 / 6250000] ~ [43.8178, 43.8178]
	LHS     := (LHS NUM) / (LHS DEN) in [-19975 / 4381780456, 20055 / 4381780456] ~ [-0.0, 0.0]
	CD / |V| ^ .5 in [42532540 / 173205081, 85065081 / 346410161] ~ [0.24556, 0.24556]
	Success: LHS < CD / |V| ^ .5

Success: existence proven
\end{verbatim}
\par\noindent\rule{\textwidth}{0.4pt}
\normalsize
\vspace{.1cm}
Code notes:
\begin{itemize}
    \item In specifying our desired square lengths we used the \texttt{"default"} keyword to say that every square edge length not explicitly given (all of them in this case) should have desired square value 1.
\end{itemize}
Proof log notes:
\begin{itemize}
    \item The collision distance for the starting realization $\approx.8506$ is correct, consider disjoint edges on adjacent faces.
\end{itemize}
Here is a screenshot of the 3D model produced by the call to \texttt{save\_to\_obj}:\\
\begin{center}\includegraphics[width = .5\linewidth]{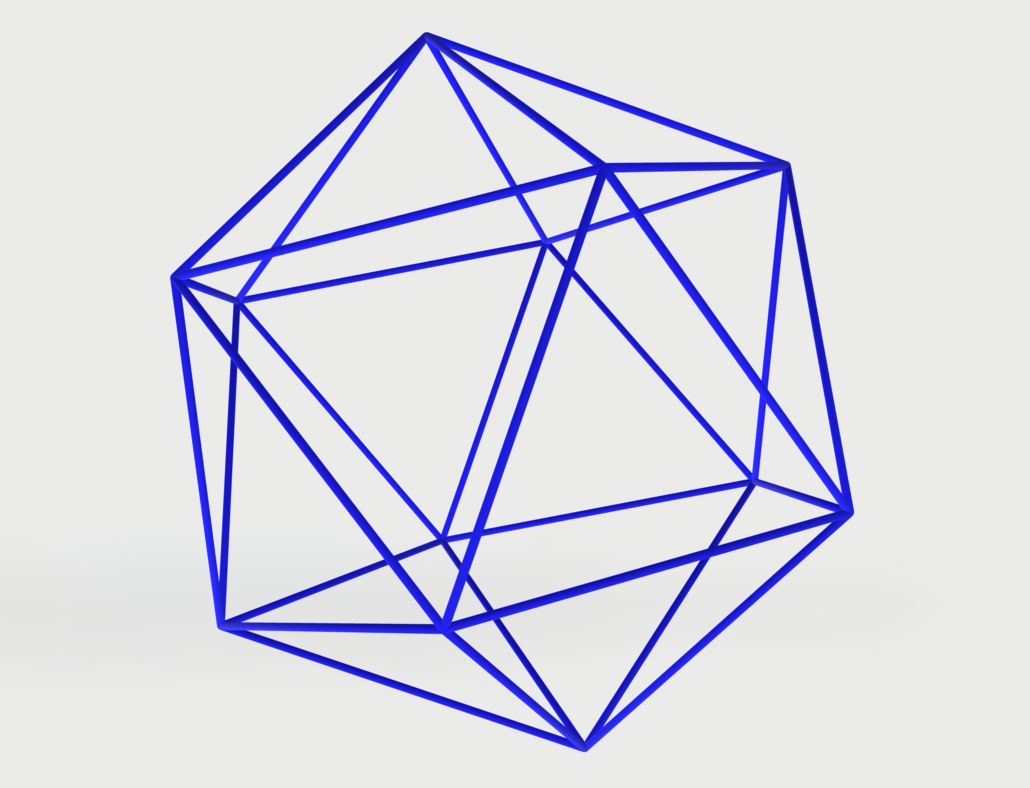}.
\end{center}

\subsection{4-Simplex}
A simple example past three dimensions.
\par\noindent\rule{\textwidth}{0.4pt}
\tiny
\begin{verbatim}
from shape_existence.complexes_and_proofs import AbstractSimplicialComplex, Fraction
ASC = AbstractSimplicialComplex

four_simplex = ASC(mode = "maximal_simplices", data = [["a", "b", "c", "d", "e"]])
four_simplex_realized = four_simplex.heuristic_embed(dim = 4, desired_sq_lengths = {"default" : 1}, final_round_digits = 9)
four_simplex_realized.save_as_obj("four_simplex.obj", "./obj_files/")
four_simplex_realized.prove_existence(desired_sq_lengths = {"default" : Fraction(1)}, verbose = True)
\end{verbatim}
\par\noindent\rule{\textwidth}{0.4pt}
\vspace{.1cm}
\normalsize
And here is the resulting successful proof log:\\
\par\noindent\rule{\textwidth}{0.4pt}
\tiny
\begin{verbatim}
Attempting to prove existence

Starting realization:
	Abstract data:	
		mode: maximal_simplices
		data: [['a', 'b', 'c', 'd', 'e']]
	Coordinate Data:
		c : [7256651 / 10000000, 7642927 / 8000000, 674111317 / 1000000000, 171828441 / 200000000]
		a : [10745679 / 200000000, 636560891 / 1000000000, 339792449 / 1000000000, 280268003 / 1000000000]
		e : [828590217 / 1000000000, 293617321 / 1000000000, 851392509 / 1000000000, 137985737 / 1000000000]
		b : [474175227 / 500000000, 59615879 / 62500000, 3560127 / 31250000, 61270543 / 1000000000]
		d : [395172831 / 500000000, 103949773 / 500000000, 1266793 / 40000000, 175750757 / 250000000]

Desired square lengths:
	default : 1

Checking inequality 1:
	 d  = 4
	|V| = 5
	|E| = 10
	Success: d|V| >= |E|

Checking self-intersection:
	Square collision distance = 1220703126314795045203246989624242191813040793340115540477650010506609 / 2929687508261558621408864856739271611982097732639332277343750000000000
	Collision distance in [32274861 / 50000000, 64549723 / 100000000] ~ [0.6455, 0.6455]
	Success: starting realization non-self-intersecting

Checking inequality 2:
	sigma_min in [9999 / 5000, 2] ~ [1.9998, 2.0]
	Success: sigma_min > 0

Checking inequality 3:
	rho_squared = 499054969996360073 / 100000000000000000000000000000000000
	rho in [111697691 / 50000000000000000, 611697691 / 50000000000000000] ~ [0.0, 0.0]
	sigma_min ^ 2 / (16 * E ^ .5) in [99980001 / 1264911068, 12500000 / 158113883] ~ [0.07904, 0.07906]
	Success: rho < sigma_min ^ 2 / (16 * E ^ .5)

Checking inequality 4:
	LHS NUM := sigma_min - [sigma_min ^ 2 - 16 * rho * |E| ^ .5 ] ^ .5 in [-9999 / 50000000, 1251 / 6250000] ~ [-0.0002, 0.0002]
	LHS DEN := 8 * |E| ^ .5 in [158113883 / 6250000, 316227767 / 12500000] ~ [25.29822, 25.29822]
	LHS     := (LHS NUM) / (LHS DEN) in [-9999 / 1264911064, 1251 / 158113883] ~ [-1e-05, 1e-05]
	CD / |V| ^ .5 in [32274861 / 111803399, 64549723 / 223606797] ~ [0.28868, 0.28868]
	Success: LHS < CD / |V| ^ .5

Success: existence proven
\end{verbatim}
\par\noindent\rule{\textwidth}{0.4pt}
\normalsize
\vspace{.1cm}

The \texttt{save\_to\_obj} function produces a 3D model by using the first 3 coordinates of each point in the initial realization. Here is a screenshot of a produced model:\\
\begin{center}\includegraphics[width = .4\linewidth]{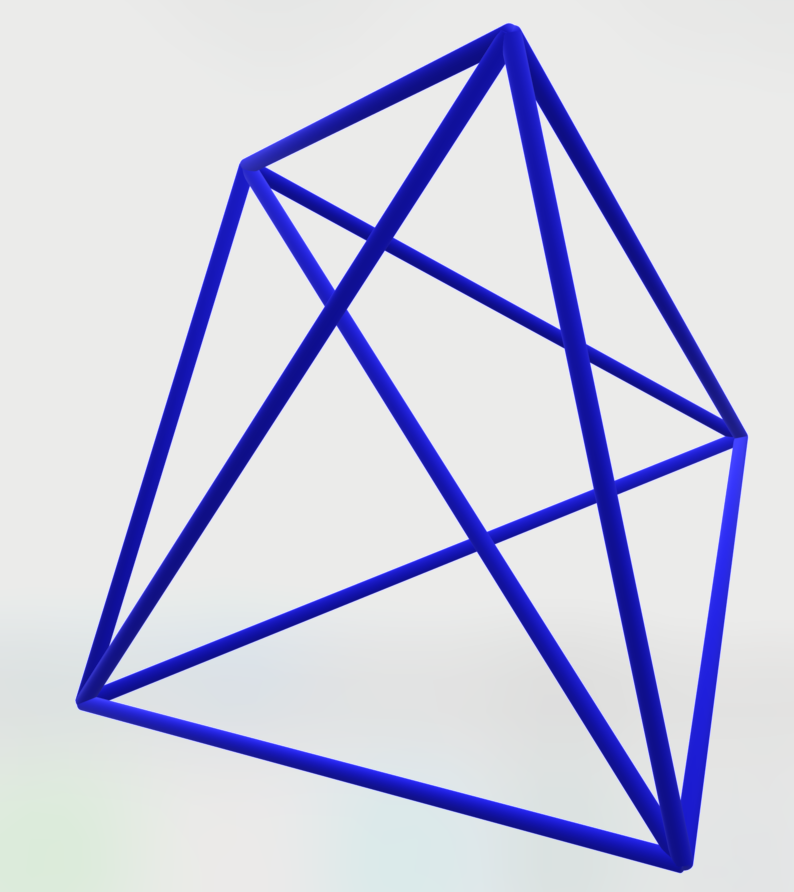}.
\end{center}

\section{A failed proof}\label{nonexamples}
Sometimes our proof technique does not succeed. Consider the hexagonal antiprism pictured below:\\
\begin{center}\includegraphics[width = .4\linewidth]{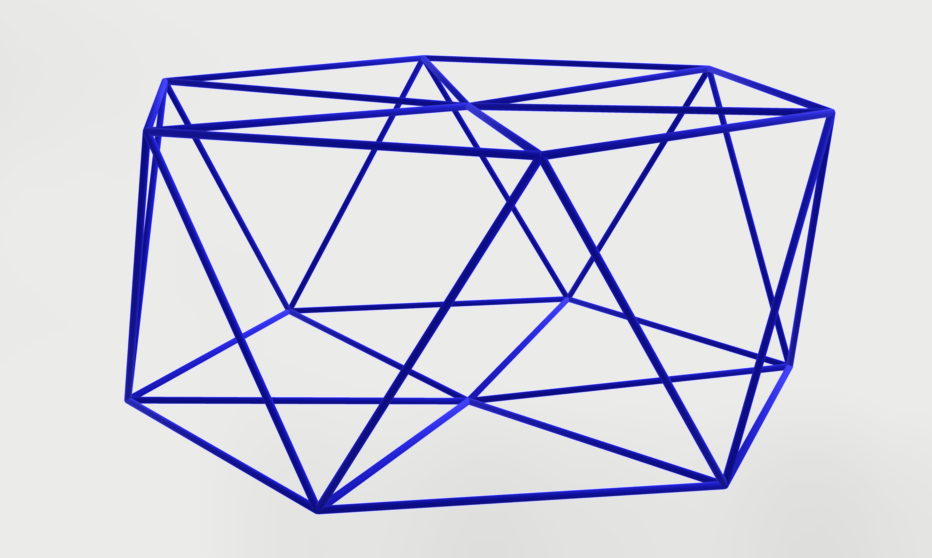}.
\end{center}
Here is sample code input and output:\\
\footnotesize
\par\noindent\rule{\textwidth}{0.4pt}
\begin{verbatim}
from shape_existence.complexes_and_proofs import AbstractSimplicialComplex, Fraction
ASC = AbstractSimplicialComplex

bad_antiprism = ASC(mode = "maximal_simplices", data = [["tm", "t1", "t2"], ["tm", "t2", "t3"], ["tm", "t3", "t4"],
                                                        ["tm", "t4", "t5"], ["tm", "t5", "t6"], ["tm", "t6", "t1"],
                                                        ["bm", "b1", "b2"], ["bm", "b2", "b3"], ["bm", "b3", "b4"],
                                                        ["bm", "b4", "b5"], ["bm", "b5", "b6"], ["bm", "b6", "b1"],
                                                        ["t1", "b1", "b2"], ["t2", "b2", "b3"], ["t3", "b3", "b4"],
                                                        ["t4", "b4", "b5"], ["t5", "b5", "b6"], ["t6", "b6", "b1"],
                                                        ["b1", "t1", "t6"], ["b2", "t2", "t1"], ["b3", "t3", "t2"],
                                                        ["b4", "t4", "t3"], ["b5", "t5", "t4"], ["b6", "t6", "t5"]])
bad_antiprism_realized = bad_antiprism.heuristic_embed(dim = 3, desired_sq_lengths = {"default" : 1}, final_round_digits = 9)
bad_antiprism_realized.save_as_obj("bad_antiprism.obj", "./obj_files/")
bad_antiprism_realized.prove_existence(desired_sq_lengths = {"default" : 1}, verbose = True)
\end{verbatim}
\par\noindent\rule{\textwidth}{0.4pt}
\footnotesize
\begin{verbatim}
Attempting to prove existence

Starting realization:
	Abstract data:	
		mode: maximal_simplices
		data: [['tm', 't1', 't2'], ['tm', 't2', 't3'], ['tm', 't3', 't4'], ['tm', 't4', 't5'], ['tm', 't5', 't6'], 
  ['tm', 't6', 't1'], ['bm', 'b1', 'b2'], ['bm', 'b2', 'b3'], ['bm', 'b3', 'b4'], ['bm', 'b4', 'b5'],
  ['bm', 'b5', 'b6'], ['bm', 'b6', 'b1'],['t1', 'b1', 'b2'], ['t2', 'b2', 'b3'], ['t3', 'b3', 'b4'],
  ['t4', 'b4', 'b5'], ['t5', 'b5', 'b6'], ['t6', 'b6', 'b1'],['b1', 't1', 't6'], ['b2', 't2', 't1'], 
  ['b3', 't3', 't2'], ['b4', 't4', 't3'], ['b5', 't5', 't4'], ['b6', 't6', 't5']]
	Coordinate Data:
		bm : [242420139 / 1000000000, 88209211 / 200000000, 347417067 / 500000000]
		t2 : [64955007 / 62500000, 1460273147 / 1000000000, 308853307 / 500000000]
		b3 : [40363343 / 1000000000, 283159893 / 200000000, 37309113 / 62500000]
		b4 : [-35285911 / 125000000, 766903711 / 1000000000, -45931877 / 500000000]
		b6 : [391400809 / 1000000000, -269887657 / 500000000, 823167127 / 1000000000]
		b1 : [357116567 / 500000000, 109120773 / 1000000000, 302372923 / 200000000]
		b5 : [-106536099 / 1000000000, -105372201 / 500000000, 21017483 / 1000000000]
		t1 : [1427998777 / 1000000000, 141202073 / 200000000, 1145333333 / 1000000000]
		b2 : [21551239 / 40000000, 543390033 / 500000000, 55951509 / 40000000]
		t4 : [240379443 / 500000000, 337122819 / 1000000000, -574688421 / 1000000000]
		t3 : [70664569 / 125000000, 1275577083 / 1000000000, -242530763 / 1000000000]
		t6 : [268599499 / 200000000, -116264219 / 500000000, 813382133 / 1000000000]
		t5 : [869599511 / 1000000000, -26056471 / 62500000, -727601 / 15625000]
		tm : [20084469 / 20000000, 527264939 / 1000000000, 256555313 / 1000000000]

Desired square lengths:
	default : 1

Checking inequality 1:
	 d  = 3
	|V| = 14
	|E| = 36
	Success: d|V| >= |E|

Checking self-intersection:
	Square collision distance = [LONG FRACTION OMITTED]
	Collision distance in [79145971 / 100000000, 19786493 / 25000000] ~ [0.79146, 0.79146]
	Success: starting realization non-self-intersecting

Checking inequality 2:
	sigma_min in [549 / 5000, 11 / 100] ~ [0.1098, 0.11]
	Success: sigma_min > 0

Checking inequality 3:
	rho_squared = 19213882758715484274732620078053 / 500000000000000000000000000000000000
	rho in [619901 / 100000000, 309951 / 50000000] ~ [0.0062, 0.0062]
	sigma_min ^ 2 / (16 * E ^ .5) in [301401 / 2400000004, 121 / 960000] ~ [0.00013, 0.00013]
	Failed: unable to verify rho < sigma_min ^ 2 / (16 * E ^ .5)
\end{verbatim}
\par\noindent\rule{\textwidth}{0.4pt}
\vspace{.1cm}
\normalsize
We see the proof fails because the 3rd inequality is not satisfied. In general, this happens due to some combination of two factors --- (1) the initial lengths are insufficiently close to the desired lengths and (2) the lowest singular value is not sufficiently large --- and so we can not guarantee a nearby realization with the desired lengths. In this case the failure is due to $\sigma_{\min}$, which is zero at a realization with the desired edge lengths\footnote{To see this, first note that the Jacobian is a map from $\mathbb{R}^{42}$ to $\mathbb{R}^{36}$. We lose 6 rank from infinitesimal translations and rotations of the coordinates, and 2 more rank from the top and bottom vertices in the middle of the hexagons, which, when moved perpendicular to the hexagon through their neighbors, only changes edge lengths to second order. Thus there can be at most 34 non-zero singular values.}. Because of this, we believe Theorem \ref{main} cannot be applied to prove existence (unless the starting realization already has the desired lengths).

\section{Final Notes}
 We have used this technique to examine triangulations of the sphere with at most 6 triangles meeting at a given vertex. We have used the above-described technique to prove thousands of these triangulations can be embedded in $\mathbb{R}^3$ with unit-length edges. In fact, we have shown every such triangulation with up to 23 vertices exists.  Check out the online gallery here: \url{math.dartmouth.edu/~mellison/tripolys}.

\section{Acknowledgements}
Thank you to Zili Wang and Peter Doyle for many helpful discussions!
\printbibliography

\appendix
\section{Rational bounds on the lowest singular value
of a matrix with rational entries}\label{bound sigma}

Let $A$ be a matrix with rational entries and smallest singular value $\sigma_{\min}$. We give a procedure to give a good rational interval containing $\sigma_{\min}$.
Note that the procedure yields correct bounds if it runs to completion, but may (theoretically) encounter an error.

\begin{enumerate}
    \item Using any standard scientific library, compute a floating point approximation $f$ of the smallest singular values of $A$ (converting the fractions to floating point first).
    \item Round $f$ down and up a few  decimal points to obtain rational numbers $\sigma_{lb}, \sigma_{ub}$. Set $\sigma_{lb}$ to 0 if negative.
    \item Let $B=A^TA$ or $AA^T$, whichever has smaller dimensions (and choosing, say, $A^TA$ if it's a tie).
    \item Prove $\sigma_{lb}$ is indeed a lower bound on $\sigma_{\min}$ by proving $$B - \sigma_{lb}^2 I$$ is positive definite, and prove $\sigma_{ub}$ is an upper bound on $\sigma_{\min}$ by proving 
    $$B - \sigma_{ub}^2 I$$ is not positive definite.\footnote{We can decide whether a symmetric matrix of rationals is positive definite by applying Sylvester's Criterion. \cite{horn}}
\end{enumerate}
If the positive-definiteness checks succeed, and in practice they do, we now have $\sigma_{\min} \in [\sigma_{lb}, \sigma_{ub}]$.

\section{Rational bounds on the square root of a
rational number}\label{bound sqrt}
Let $x$ be a rational number for which we wish to obtain good rationally bounds on $\sqrt{x}$. We give a simple procedure to obtain these. Once again, note that the procedure yields correct bounds if it runs to completion, but may (theoretically) encounter an error.
\begin{enumerate}
    \item Convert the rational number $x$ to a floating point number and obtain a floating point square root $f$. 
    \item Round $f$ up and down a few decimal points to obtain rational numbers $l,u$. Set $l$ to 0 if it's negative.
    \item Verify $l^2 \leq x \leq u^2.$
\end{enumerate}
If these steps all succeed, we now have proven rational bounds on the square root of a rational number.\\

One can apply a similar idea to rationally bound the square root on a rational interval --- just lower bound the square root of the lower endpoint and upper bound the square root of the upper. This allows square roots to be incorporated into a rational interval arithmetic.

\section{Exactly computing the square distance between two convex sets}\label{convex distance}
Let $X$, $Y$ be convex sets in $\mathbb{R}^d$ with respective vertices $x_1, ..., x_m$ and $y_1, ..., y_n$. Suppose the vertices have rational coordinates.\footnote{This all works over any ordered field.} \\

Determining the shortest squared distance between the $X$ and $Y$ is a quadratic programming problem:
\begin{align*}
    \text{minimize }& ||\sum_{i=1}^m{\alpha_i}x_i - \sum_{i=1}^n{\beta_i}x_i||^2\\
    \text{such that }&\sum \alpha_i = \sum \beta_i = 1; \alpha_i, \beta_i \geq 0.
\end{align*}
Note here that $\alpha_i, \beta_i$ are barycentric coordinates. \\

We can bring this into the standard form of a quadratic program as follow. Let $P$ be the matrix with columns $x_i$ and let $Q$ be the matrix with columns $y_i$. Let $\alpha, \beta$ be the respective column vectors of the $\alpha_i, \beta_i$. Let $0_k, 1_k$ denote row vectors of zeroes and ones of length $k$.\\
Then our problem becomes
\begin{align*}
    \text{minimize }& 
    \begin{pmatrix}
        \alpha^T& \beta^T
    \end{pmatrix}
    \begin{pmatrix}
        X^T X & -X^T Y\\
        - Y^T X & Y^T Y
    \end{pmatrix}
    \begin{pmatrix}
        \alpha \\
        \beta
    \end{pmatrix}\\
    \text{such that }&
    \begin{pmatrix}
        1_m & 0_n\\
        -1_m & 0_n\\
        0_m & 1_n\\
        0_m & -1_n
    \end{pmatrix}
    \begin{pmatrix}
        \alpha \\
        \beta
    \end{pmatrix}
    \geq \begin{pmatrix}
        1\\
        -1\\
        1\\
        -1
    \end{pmatrix},\ 
    \begin{pmatrix}
        \alpha \\
        \beta
    \end{pmatrix}
    \geq 0.
\end{align*}
The matrix 
$$\begin{pmatrix}
        X^T X & -X^T Y\\
        - Y^T X & Y^T Y
\end{pmatrix}$$
is positive semi-definite, and so this is a convex quadratic program. One method to solve it exactly is to first convert it to a linear complementarity problem (LCP) and then solve this LCP with Lemke's algorithm. \cite{lemke}\cite{eberly} Note that this solution process will not leave the field generated by the coefficients, in our case the rationals.\\

The above procedure will exactly compute the squared distance between two convex bodies with rational coordinates, and, if desired, we may then apply our rational square root procedure to obtain a good rational interval containing this distance.\\

Note that while finding the minimum square distance is a quadratic program, deciding if the bodies intersect is, using the well-known reductions, a linear program (see e.g. \cite{vaserstein}):\\
\begin{align*}
    \text{determinine if there exist }& \alpha_i, \beta_i \geq 0\\
    \text{such that }& \sum_{i=1}^m{\alpha_i}x_i = \sum_{i=1}^n{\beta_i}x_i\\
    \text{and }&\sum \alpha_i = \sum \beta_i = 1.\\
\end{align*}

\subsection{Example distance computation}
Here is an example where this computation is performed using the \texttt{shape-existence} library to compute the distance between a line segment and triangle in $\mathbb{E}^3$:\\
\par\noindent\rule{\textwidth}{0.4pt}
\begin{verbatim}
from shape_existence.flexible_cqp import simplex_square_distance, Fraction

triangle = [[3,0,0],[0,3,0],[0,0,3]]
segment = [[0,1,1],[1,0,1]]
print(simplex_square_distance(triangle, segment, map_to_type = Fraction))
\end{verbatim}
\par\noindent\rule{\textwidth}{0.4pt}
\vspace{.1cm}
Here is the output:\\
\par\noindent\rule{\textwidth}{0.4pt}
\begin{verbatim}
(1 / 3, ([1 / 3, 4 / 3, 4 / 3], [0, 1, 1]))
\end{verbatim}
\par\noindent\rule{\textwidth}{0.4pt}
\vspace{.1cm}
Parsing the output, the shortest square distance is $1/3$, and the pair of closest points is $(1/3,\ 4/3,\ 4/3)$ and $(0,\ 1,\ 1)$ on the triangle and segment respectively.
\end{document}